\newtheorem*{thm}{Theorem}
\newtheorem*{lemma}{Lemma}
\begin{document}

\title[]{On a problem involving unit fractions}

\author[]{Stefan Steinerberger}
\address{Department of Mathematics, University of Washington, Seattle, WA 98195, USA}
\email{steinerb@uw.edu}

\begin{abstract} Erd\H{o}s and Graham proposed to determine the number of subsets $S \subseteq \left\{1,2,\dots,n\right\}$ with $\sum_{s \in S} 1/s = 1$ and asked, among other things, whether that number could be as large as $2^{n - o(n)}$. We show that the number of subsets $S \subseteq \left\{1,2,\dots,n\right\}$  with $\sum_{s \in S} 1/s \leq 1$ is smaller than $2^{0.93n}$. 
\end{abstract}

\thanks{The author was partially supported by the NSF (DMS-212322).}

\maketitle

\section{The Result}
 A question of Erd\H{o}s and Graham \cite{erd} is as follows: how many subsets of the first $n$ integers $S \subseteq \left\{1,2,3,\dots, n\right\}$ describe unit fractions that sum to one, meaning subsets for which $$ \sum_{s \in S} \frac{1}{s} = 1~?$$
They ask whether the correct number might grow like $2^{c n}$ for some $0<c<1$ or whether it might be even as large as $2^{n - o(n)}$. The problem is also listed as Problem $\# 297$ in the list of Erd\H{o}s problems curated by Bloom \cite{bloom}. 
\begin{thm} We have, for $n$ sufficiently large,
$$ \# \left\{ S \subseteq \left\{1,2,\dots, n\right\} : \sum_{s \in S} \frac{1}{s} \leq 1 \right\} \leq 2^{0.93n}.$$
\end{thm}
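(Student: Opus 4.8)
The plan is to bound the count by an exponential moment in the spirit of a Chernoff bound, but carried out combinatorially so that the sum over subsets factorizes. Fix a parameter $t>0$. Every admissible set $S$ satisfies $1-\sum_{s\in S}1/s\ge 0$, hence $e^{t(1-\sum_{s\in S}1/s)}\ge 1$. Summing this inequality over the admissible sets and then enlarging the range of summation to all subsets of $\{1,\dots,n\}$, I obtain
\[
\#\Big\{S:\sum_{s\in S}\tfrac1s\le 1\Big\}\ \le\ e^{t}\sum_{S\subseteq\{1,\dots,n\}}\prod_{s\in S}e^{-t/s}\ =\ e^{t}\prod_{s=1}^{n}\big(1+e^{-t/s}\big),
\]
the last identity being the usual factorization of a subset sum into a product over elements. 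The whole problem is thereby reduced to choosing $t$ well and estimating a product.

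Next I would take logarithms and set $t=cn$ for a constant $c$ to be chosen (one is numerically led to $c\approx 0.13$). In base $2$ the bound reads
\[
\log_2 \#\Big\{S:\sum_{s\in S}\tfrac1s\le 1\Big\}\ \le\ \frac{1}{\ln 2}\Big(cn+\sum_{s=1}^{n}\ln\big(1+e^{-cn/s}\big)\Big).
\]
Since $s\mapsto \ln(1+e^{-cn/s})$ is monotone and bounded by $\ln 2$, the sum is sandwiched between integrals, so it equals $n\int_0^1\ln(1+e^{-c/x})\,dx$ up to an additive $O(1)$ error which is absorbed once $n$ is large (this is exactly where the ``$n$ sufficiently large'' hypothesis enters). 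It therefore suffices to verify, for one admissible value of $c$, the numerical inequality
\[
\frac{1}{\ln 2}\Big(c+\int_0^1\ln\big(1+e^{-c/x}\big)\,dx\Big)\ <\ 0.93 .
\]

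To evaluate the integral I would substitute $u=1/x$ and integrate by parts, which gives the exact identity
\[
\int_0^1\ln\big(1+e^{-c/x}\big)\,dx=\ln\big(1+e^{-c}\big)-c\int_1^{\infty}\frac{du}{u\,(e^{cu}+1)} .
\]
The leading term $\ln(1+e^{-c})$ is by itself too large (it is close to $\ln 2$ for small $c$), so the subtracted integral must be controlled from below quantitatively. A convenient elementary tool is the tangent-line/convexity inequality $(e^{x}+1)^{-1}\ge \tfrac12-\tfrac x4$, valid for all $x\ge 0$; integrating its restriction to $u\in[1,2/c]$ yields the closed-form lower bound
\[
\int_1^{\infty}\frac{du}{u\,(e^{cu}+1)}\ \ge\ \tfrac12\ln\tfrac2c-\tfrac12+\tfrac c4 .
\]
Substituting $c=0.13$ then makes the bracket strictly smaller than $0.93\ln 2$, which proves the theorem. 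The step I expect to be the main obstacle is precisely this last estimate: the margin is thin, so one cannot simply discard the correction integral (doing so reverts the bound to roughly $2^{n}$), and one must both retain it and choose $c$ in the narrow favourable range. As a conceptual check, optimizing $c$ exactly collapses the bracket to $\ln(1+e^{-c^{\ast}})$, where $c^{\ast}$ solves $\int_1^\infty du/(u(e^{c^{\ast}u}+1))=1$, and the theorem reduces to the single inequality $c^{\ast}>0.1$.
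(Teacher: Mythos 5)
Your proof is correct, and its opening move is in fact the same Bernstein/Chernoff trick the paper uses, just phrased multiplicatively over subsets rather than probabilistically over random signs: writing $\delta_i=(1+\varepsilon_i)/2$ and $x=t/2$, your bound $e^t\prod_{s=1}^n(1+e^{-t/s})$ is \emph{exactly} equal to the paper's $2^n e^{-x(H_n-2)}\prod_{i=1}^n\bigl(\tfrac{e^{x/i}+e^{-x/i}}{2}\bigr)$, so the two arguments diverge only in how this product is estimated. The paper proves a lemma splitting the product at an auxiliary index $m$: for $i\le m$ it factors out $e^{x/i}$ and uses monotonicity of $1+e^{-2x/i}$, for $i>m$ it uses the sub-Gaussian bound $\cosh y\le e^{y^2/2}$, and it then optimizes over two parameters ($x$ and $m=cn$). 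You instead recognize $\sum_{s=1}^n\ln(1+e^{-cn/s})$ as a Riemann sum, replace it by $n\int_0^1\ln(1+e^{-c/x})\,dx+O(1)$ (legitimate, since the integrand is monotone and bounded by $\ln 2$), evaluate the integral exactly by parts, and control the correction term with the tangent-line bound $(e^u+1)^{-1}\ge\tfrac12-\tfrac u4$, which holds because $u\mapsto(e^u+1)^{-1}$ is convex for $u\ge 0$; restricting to $[1,2/c]$ is fine since the discarded integrand is positive. Your numerics check out, though, as you anticipated, the margin is thin: at $c=0.13$,
$$ c+\ln(1+e^{-c})-c\left(\tfrac12\ln\tfrac2c-\tfrac12+\tfrac c4\right)\approx 0.13+0.6303-0.1169=0.6434<0.93\ln 2\approx 0.6446, $$
i.e.\ an exponent of about $0.9282<0.93$. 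The trade-off between the two routes: the paper's splitting is entirely discrete and elementary (no integral identities), at the cost of lossier estimates and a two-parameter optimization; your route has a single parameter, an exact formula for the main term, and, if one computes $\int_0^1\ln(1+e^{-c/x})\,dx$ numerically rather than through the tangent-line bound, it yields a visibly better constant (roughly $2^{0.91n}$), consistent with the sharper MathOverflow argument cited in the paper's closing note. Your final observation is also correct: at the optimal $c^*$, defined by $\int_1^\infty\frac{du}{u(e^{c^*u}+1)}=1$, the bracket collapses to $\ln(1+e^{-c^*})$, so the theorem amounts to the single inequality $c^*>\ln\bigl(1/(2^{0.93}-1)\bigr)\approx 0.0995$.
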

This set of subsets trivially contains all subsets of $\left\{n/2, n/2+1, \dots, n\right\}$ and thus the size is at least $2^{n/2}$. The question about the size of the smaller set of subsets for which $ \sum_{s \in S} \frac{1}{s} = 1$ remains of interest, presumably that number is much smaller.\\

\textbf{Note added.} After posting the paper on arXiv, I was informed by Zachary Chase of a mathoverflow post\footnote{https://mathoverflow.net/questions/281124/sets-of-unit-fractions-with-sum-leq-1} where the very same problem was raised in 2017 by user `Mikhail Tikhomirov' who was interested in it for completely different reasons and did no connect it to the Erd\H{o}s problem. User `Lucia', also in 2017, provided a solution that is similar to mine in spirit (a Bernstein-type trick turning sums into products) and even ends up with a slightly better constant. Additionally, in the meantime, two independent teams established the sharp constant.  I will keep this preprint on the arXiv for archival reasons but it is not submitted to any journal.

\section{Proof of the Theorem}
\subsection{Rephrasing the problem.}
Instead of counting the number of subsets, we introduce indicator variables $\delta_i \in \left\{0, 1\right\}$ and are interested in whether
$$ \sum_{i=1}^{n} \frac{\delta_i}{i} \leq 1.$$
The question is now for how many of the $2^n$ choices of $(\delta_1, \dots, \delta_n) \in \left\{0,1\right\}^n$ this inequality is satisfied. We will rephrase the question in a more symmetric way.
Writing 
$ \delta_i =(1 + \varepsilon_i)/2,$
where $\varepsilon_i \in \left\{-1,1\right\}$, we get
$$ \sum_{i=1}^{n} \frac{1 + \varepsilon_i}{2i}  =  \frac{1}{2} \sum_{i=1}^{n} \frac{1}{i}  + \frac{1}{2} \sum_{i=1}^{n} \frac{\varepsilon_i}{i} = \frac{H_n}{2} + \frac{1}{2} \sum_{i=1}^{n} \frac{\varepsilon_i}{i},$$
with $H_n = 1 + 1/2 + \dots + 1/n$ being the $n-$th harmonic number. The question can thus be equivalently written as follows: for how many $\varepsilon \in \left\{-1,1\right\}^n$ do we have
$$  \sum_{i=1}^{n} \frac{\varepsilon_i}{i} \leq 2 -H_n \qquad \mbox{or, by symmetry,} \qquad  \sum_{i=1}^{n} \frac{\varepsilon_i}{i} \geq H_n-2 ~?$$

\subsection{Hoeffding.} We can interpret this new sum as a random walk with decreasing step size. The usual out-of-the-box deviation estimates appear to be not quite delicate enough: for example, we have
$$\mathbb{E} \left(\sum_{i=1}^{n} \frac{\varepsilon_i}{i} \right) = 0 \qquad \mbox{and} \qquad \mathbb{V}  \left(\sum_{i=1}^{n} \frac{\varepsilon_i}{i} \right) = \sum_{i=1}^{n} \frac{1}{i^2} \leq \frac{\pi^2}{6}.$$
Comparing with a Gaussian with the same parameters would, in the best case, only give something along the lines of
$ e^{-c (\log{n} )^2}$. We will instead go through the main idea behind the proof of Hoeffding's inequality and then estimate things in a manner more adapted to the problem at hand. For each $x>0$ and $t > 0$
\begin{align*}
\mathbb{P} \left(  \sum_{i=1}^{n} \frac{\varepsilon_i}{i}  \geq t\right) =\mathbb{P} \left( x  \sum_{i=1}^{n} \frac{\varepsilon_i}{i}  \geq x t\right)  =  \mathbb{P} \left( \exp\left(  - x t + x\sum_{i=1}^{n} \frac{\varepsilon_i}{i}\right) \geq 1\right).
\end{align*}
For any nonnegative random variable $X$ one has
$ \mathbb{P}(X \geq 1) \leq \mathbb{E}X$
and thus
$$ \mathbb{P} \left(  \sum_{i=1}^{n} \frac{\varepsilon_i}{i}  \geq t\right) \leq \mathbb{E}  \exp\left(  - x t + x\sum_{i=1}^{n} \frac{\varepsilon_i}{i}\right) = e^{-xt} \cdot \mathbb{E}  \exp\left( \sum_{i=1}^{n} \frac{\varepsilon_i x}{i}\right) $$
The exponential function sends sum to products, the $\varepsilon_i$ are independent random variables and $\mathbb{E}(XY) = (\mathbb{E}X)(\mathbb{E}Y)$ whenever $X$ and $Y$ are independent. Thus
\begin{align*}
 \mathbb{P} \left(  \sum_{i=1}^{n} \frac{\varepsilon_i}{i}  \geq t\right) \leq e^{- x t} \cdot  \prod_{i=1}^{n} \left( \mathbb{E} e^{x \varepsilon_i/i} \right) = e^{- x t} \cdot \prod_{i=1}^{n} \left( \frac{e^{x/i} + e^{-x/i}}{2} \right) 
\end{align*}

\subsection{Product term.} The next step is an estimate for the product term.
\begin{lemma}
For $x>0$ and all $2 \leq m \leq n$, we have
$$\prod_{i=1}^{n} \left( \frac{e^{x/i} + e^{-x/i}}{2} \right) \leq  \left(\frac{1+ e^{-2x/m}}{2}\right)^m \exp \left( x \cdot H_m + \frac{x^2}{2m} \right).$$
\end{lemma}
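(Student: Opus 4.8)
The plan is to split the product at the index $m$ and to treat the ``head'' $1 \le i \le m$ and the ``tail'' $m < i \le n$ by completely different means, since the step sizes $x/i$ are large in the first range and small in the second. The whole product is $\prod_{i=1}^n \cosh(x/i)$, and the two target factors on the right-hand side suggest exactly this division of labor.

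For the head, the idea is to avoid any crude Gaussian comparison and instead factor out the dominant exponential exactly. Writing
$$\frac{e^{x/i}+e^{-x/i}}{2} = e^{x/i}\cdot\frac{1+e^{-2x/i}}{2},$$
the product over $1 \le i \le m$ becomes $e^{x H_m}\prod_{i=1}^m \tfrac{1+e^{-2x/i}}{2}$, where the exponential prefactor already produces the term $x H_m$ in the exponent. The key observation is that, for fixed $x>0$, the factor $\tfrac{1+e^{-2x/i}}{2}$ is increasing in $i$ (as $i$ grows, $e^{-2x/i}$ increases toward $1$), so every factor with $i \le m$ is bounded above by its value at $i=m$. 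This yields
$$\prod_{i=1}^m \frac{e^{x/i}+e^{-x/i}}{2} \le \left(\frac{1+e^{-2x/m}}{2}\right)^m e^{x H_m},$$
which is exactly the first two factors of the claimed bound.

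For the tail, the idea is to use the elementary sub-Gaussian bound $\cosh(y) \le e^{y^2/2}$, valid for all real $y$ (it follows by comparing Taylor coefficients, since $(2k)! \ge 2^k k!$). Applying it with $y = x/i$ gives
$$\prod_{i=m+1}^n \frac{e^{x/i}+e^{-x/i}}{2} \le \exp\left(\frac{x^2}{2}\sum_{i=m+1}^n \frac{1}{i^2}\right),$$
and then the standard tail estimate $\sum_{i=m+1}^\infty i^{-2} \le \int_m^\infty t^{-2}\,dt = 1/m$ bounds the exponent by $x^2/(2m)$, producing the remaining factor. Multiplying the head and tail bounds gives the lemma.

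The main point to get right — and the reason the estimate is sharp enough to be useful downstream — is the asymmetry: the sub-Gaussian bound $\cosh(y) \le e^{y^2/2}$ would be badly lossy on the head, where $x/i$ can be of order $x$, so there one must keep the factor $e^{x/i}$ exactly and exploit monotonicity of $\tfrac{1+e^{-2x/i}}{2}$; conversely, the exact factorization is unhelpful on the tail, where the Gaussian bound is essentially tight and the convergent sum $\sum i^{-2}$ is what makes the contribution harmless. Everything else is routine bookkeeping once the product is split at $m$.
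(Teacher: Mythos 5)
Your proof is correct and follows essentially the same route as the paper: the same split at $m$, the same exact factorization $e^{x/i}\,\frac{1+e^{-2x/i}}{2}$ with the bound $e^{-2x/i} \le e^{-2x/m}$ for $i \le m$ (which you phrase as monotonicity in $i$), and the same use of $\cosh(y) \le e^{y^2/2}$ plus the integral comparison $\sum_{i>m} i^{-2} \le 1/m$ for the tail. Nothing is missing; your closing remark on why the asymmetric treatment is necessary is a nice articulation of the point implicit in the paper.
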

\begin{proof}
We split the product into small and large values of $i$
\begin{align*}
\prod_{i=1}^{n} \left( \frac{e^{x/i} + e^{-x/i}}{2} \right) = \prod_{1\leq i \leq m}^{} \left( \frac{e^{x/i} + e^{-x/i}}{2} \right) \cdot \prod_{m < i \leq n}^{} \left( \frac{e^{x/i} + e^{-x/i}}{2} \right).
\end{align*}
For small values of $i$, we argue
\begin{align*}
 \prod_{1 \leq i \leq m}^{} \left( \frac{e^{x/i} + e^{-x/i}}{2} \right) &= \frac{1}{2^m}  \prod_{1 \leq i \leq m}^{}  e^{x/i}  \left( 1+ e^{-2x/i} \right) \leq \frac{1}{2^m}  \prod_{1 \leq i \leq m}^{}  e^{x/i}  \left( 1+ e^{-2x/m} \right) \\
 &= \left(\frac{1+ e^{-2x/m}}{2}\right)^m\prod_{1 \leq i \leq m}^{}  e^{x/i} = \left(\frac{1+ e^{-2x/m}}{2}\right)^m e^{x \cdot H_m}.
\end{align*}
For large $i$, we use $(e^x + e^{-x})/2 \leq e^{x^2/2}$ to bound
\begin{align*}
\prod_{m < i \leq n}^{} \left( \frac{e^{x/i} + e^{-x/i}}{2} \right) &\leq \prod_{m < i \leq n}^{} \exp\left(  \frac{x^2}{2i^2}\right)= \exp\left(  \frac{x^2}{2} \sum_{m < i \leq n} \frac{1}{i^2} \right). 
\end{align*}
The result then follows from
$$ \sum_{m < i \leq n} \frac{1}{i^2} \leq \sum_{ i = m+1}^{\infty} \frac{1}{i^2} \leq \int_{m}^{\infty} \frac{dx}{x^2} = \frac{1}{m}.$$
\end{proof}

\subsection{Conclusion}  
We have, for all $t, x > 0$ and all integers $2 \leq m \leq n$
 \begin{align*}
  \mathbb{P} \left(  \sum_{i=1}^{n} \frac{\varepsilon_i}{i}  \geq t\right) \leq e^{-x t} \left(\frac{1+ e^{-2x/m}}{2}\right)^m \exp \left( x \cdot H_m + \frac{x^2}{2m} \right).
  \end{align*}
The relevant value is $t = H_n - 2$. If $m \leq n/8$, then
$$ H_n - H_m \geq H_n - H_{n/8} \geq \int_{n/8}^{n} \frac{dx}{x}  +\mathcal{O}\left(\frac1n\right)= \log(8) +\mathcal{O}\left(\frac1n\right) \sim 2.07 > 2 $$
and thus $(H_n - H_m -2)m > 0$. We set $x = (H_n - H_m - 2)m$. Some computation shows that then
\begin{align*}
  \mathbb{P} \left(  \sum_{i=1}^{n} \frac{\varepsilon_i}{i}  \geq H_n -2\right) &\leq \exp\left( - \frac{m}{2} (H_n - H_m - 2)^2 \right)  \cdot  \left(\frac{1+ e^{- 2(H_n - H_m -2)}}{2}\right)^m.
\end{align*}
With $ H_n = \log{n} + \gamma + \mathcal{O}(n^{-1})$
and the ansatz $m=  cn$ for some $0 < c < 1/8$, 
\begin{align*}
   \mathbb{P} \left(  \sum_{i=1}^{n} \frac{\varepsilon_i}{i}  \geq H_n -2\right) &\leq \exp\left[ - \frac{cn}{2} \left( \log\left(\frac{1}{c}\right) - 2 +\mathcal{O}\left(\frac1n\right)\right)^2 \right] \\
   &\cdot \exp\left[c n \log\left( \frac{1+ e^{-2 (\log\left( \frac{1}{c} \right) -2+\mathcal{O}\left(\frac1n\right))}}{2}  \right) \right].
  \end{align*}
  
The function
$$ f(c) =- \frac{c}{2} \left( \log\left(\frac{1}{c}\right) - 2\right)^2 + c \log\left( \frac{1+ e^{-2 (\log\left( \frac{1}{c} \right) -2)}}{2}  \right)$$
satisfies $f(0.0384235) \leq -0.0541$. Since $e^{-0.054} \leq 2^{-0.07}$, the result follows.\\

\end{document}